\documentclass[12pt,oneside,reqno]{amsart}

\usepackage[top=2.5cm,bottom=2.5cm,left=3cm,right=3cm]{geometry}

\usepackage{mathtools}
\IfFileExists{newtx.sty}{
  \usepackage[libertinus]{newtx}
}{
  \usepackage{libertinus}
}
\usepackage{microtype}
\usepackage{mathrsfs}
\DeclareFontFamily{U}{rsfs}{\skewchar\font127 }
\DeclareFontShape{U}{rsfs}{m}{n}{%
  <-6.5> rsfs5
  <6.5-8> rsfs7
  <8-> rsfs10
}{}
\usepackage[mathcal]{eucal}

\usepackage{tikz-cd}
\usepackage{enumitem}
\setlist{itemsep=1pt,topsep=4pt}
\usepackage{graphicx}
\usepackage{comment}
\DeclareMathAlphabet{\mathcalCM}{OMS}{cmsy}{m}{n}

\usepackage[
  backend=biber,
  giveninits=true,
  style=alphabetic,
  backref=false,
  url=false,
  doi=false,
  isbn=false
]{biblatex}
\theoremstyle{plain}
\newtheorem{theorem}{Theorem}[section]
\newtheorem{proposition}[theorem]{Proposition}
\newtheorem{lemma}[theorem]{Lemma}
\newtheorem{corollary}[theorem]{Corollary}

\theoremstyle{definition}
\newtheorem{definition}[theorem]{Definition}

\theoremstyle{remark}
\newtheorem{remark}[theorem]{Remark}

\numberwithin{equation}{section}

\usepackage{xcolor}
\definecolor{citeblue}{rgb}{0,0.35,0.75}
\usepackage[
  colorlinks=true,
  linkcolor=citeblue,
  citecolor=citeblue,
  urlcolor=citeblue
]{hyperref}
\usepackage[nameinlink,noabbrev]{cleveref}
\Crefname{theorem}{Theorem}{Theorems}
\Crefname{conjecture}{Conjecture}{Conjectures}
\Crefname{question}{Question}{Questions}

\title{Derived equivalences and isogenies of supersingular abelian varieties}

\author{An Jiang}
\author{Zhiyuan Li}
\author{Ziwei Lu}

\subjclass[2020]{Primary 14K05; Secondary 14F08, 14K02}

\keywords{Supersingular abelian varieties, derived equivalences, Fourier--Mukai partners, derived isogenies, principal polarizations}

\thanks{Zhiyuan Li is supported by the NSFC (Grant Nos.~12171090 and 12425105) and the Shanghai Pilot Program for Basic Research (Grant No.~21TQ00).
Zhiyuan Li is also a member of LMNS.
An Jiang and Ziwei Lu are supported by the NSFC (Grant No.~12121001).}

\begin{document}

\begin{abstract}
We study (twisted) Fourier--Mukai partners of supersingular abelian varieties.
We give a simple proof that any two supersingular abelian varieties of the same dimension at least two are derived isogenous, 
answering a question raised in \cite{LLT26} and generalizing the result of \cite{LiZou2026}. 
Moreover, in every dimension greater than two, 
we construct derived-equivalent supersingular abelian varieties whose corresponding positive powers are never isomorphic.
We also give conditions under which derived equivalence between two supersingular principally polarized abelian varieties implies that their underlying abelian varieties are isomorphic.
In particular, this implication holds in dimensions at most three.
\end{abstract}

\maketitle

\section{Introduction}\label{sec:introduction}
The derived Torelli theorem of Orlov and Polishchuk characterizes derived equivalences between abelian varieties geometrically.
They assert that for two abelian varieties $X$ and $Y$ over an algebraically closed field, 
one has 
\[
D^b(X)\simeq D^b(Y)
\quad\Longleftrightarrow\quad
\begin{gathered}
\text{there exists a symplectic isomorphism}\\
X\times\widehat{X}\xrightarrow{\sim}Y\times\widehat{Y}.
\end{gathered}
\]
See \cite{Orlov2002,Polishchuk1996} for details.
Allowing Brauer twists and finite chains of equivalences leads to \emph{derived isogeny}, 
a notion introduced by Huybrechts for K3 surfaces \cite{Huybrechts2019} and extended to abelian varieties in \cite{LiZou2026}.

\begin{definition}
\label{def:derived-isogeny}
Two abelian varieties $X$ and $Y$ over a field $k$ are \emph{derived isogenous}, 
written $D^b(X)\sim D^b(Y)$,
if there exist abelian varieties
\[
X=X_0,\ X_1,\ \ldots,\ X_n=Y
\]
and classes $\alpha_i\in\operatorname{Br}(X_{i-1})$ and $\beta_i\in\operatorname{Br}(X_i)$, 
together with $k$-linear exact equivalences
\[
D^b(X_{i-1},\alpha_i)\simeq D^b(X_i,\beta_i),
\qquad 1\leq i\leq n.
\]
Here $D^b(Z,\delta)$ denotes the bounded derived category of $\delta$-twisted coherent sheaves on $Z$.
See \cite[Definition~1.1.1]{LiZou2026} for the notion of derived isogeny.
\end{definition}

Derived isogeny is certainly an equivalence relation. 
Its connection with classical isogeny is described by Li, Lu, and Tang using their twisted derived Torelli theorem \cite{LLT26}.
Recall that an isogeny is \emph{principal} if its degree is a square.
For abelian varieties $X$ and $Y$ of dimension $g\geq 2$ over an algebraically closed field $k$, their results take the following form \cite[Theorems~1.6 and~1.7]{LLT26}:
\begin{enumerate}[label=\textup{(\roman*)},leftmargin=*,itemsep=4pt]
\item If $\operatorname{char}(k)=0$, then
\[
D^b(X)\sim D^b(Y)
\quad\Longleftrightarrow\quad
X \text{ and } Y \text{ are principally isogenous}.
\]
\item If $\operatorname{char}(k)=p>2$, then
\[
\begin{gathered}
X \text{ and } Y \text{ are prime-to-}p
\text{ principally isogenous}
\\[2pt]
\Longrightarrow\quad D^b(X)\sim D^b(Y).
\end{gathered}
\]
\end{enumerate}

The prime-to-$p$ restriction leaves nontrivial $p$-primary Brauer classes beyond the scope of the twisted derived Torelli theorem.
The question of extending these results to positive characteristic,
especially to supersingular abelian varieties, was raised in \cite[Remark~1.9]{LLT26}.
In particular, one may ask whether any two supersingular abelian varieties of the same dimension at least two are derived isogenous,
a case in which inseparable principal isogenies must be taken into account.
We answer this question affirmatively via a very simple method.
For the remainder of this introduction, let $k$ be an algebraically closed field of characteristic $p>0$.

\begin{theorem}\label{thm:intro-pair}
Let $X$ be an abelian variety over $k$ of dimension $g\geq 1$.
Then $X$ is supersingular if and only if it is derived isogenous to a superspecial abelian variety.
Moreover, any two supersingular abelian varieties over $k$ of the same dimension $g\geq 2$ are derived isogenous.
\end{theorem}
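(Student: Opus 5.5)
The plan is to reduce everything to elementary isogenies of degree $p$ whose kernel is $\alpha_p$, and to show that each such isogeny can be realised by a twisted derived equivalence. The second assertion then follows from the first by a classical fact: for $g\geq 2$ all superspecial abelian varieties of dimension $g$ over $k$ are isomorphic to $E^g$ for a supersingular elliptic curve $E$ (Deligne, Ogus, Shioda). Indeed, if $X$ and $Y$ are supersingular of dimension $g\geq 2$, the first part gives $D^b(X)\sim D^b(E^g)\sim D^b(Y)$, and derived isogeny is an equivalence relation. So everything rests on the first assertion.

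\emph{Derived isogeny preserves supersingularity.} Supersingularity means that all slopes of the Newton polygon of $H^1_{\mathrm{crys}}$, or equivalently of the $p$-divisible group, are $1/2$. Superspecial varieties are supersingular, so it suffices to check that a single twisted equivalence $D^b(X,\alpha)\simeq D^b(Y,\beta)$ preserves the Newton polygon.

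I would get this by passing to $X\times\widehat X$. The kernel of the equivalence induces an isogeny, or at least a correspondence with rational coefficients, $X\times\widehat X\to Y\times\widehat Y$. For this I would invoke the twisted Orlov--Polishchuk construction of \cite{LLT26}, or alternatively note that twisted Fourier--Mukai kernels give isomorphisms of rational crystalline cohomology with its $F$-isocrystal structure. Hence $X\times\widehat X$ and $Y\times\widehat Y$ have the same Newton polygon. Since $X$ and $\widehat X$ have the same slopes, $X$ is supersingular if and only if $X\times\widehat X$ is, and the same holds for $Y$. This direction I expect to be routine.

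\emph{Every supersingular variety is derived isogenous to a superspecial one.} Let $X$ be supersingular. By Oort's results on minimal isogenies, there is a chain of isogenies
\[
E^g=X_0\to X_1\to\cdots\to X_n=X,
\]
each of degree $p$ with kernel isomorphic to $\alpha_p$: quotient successively by $\alpha_p$-subgroups of the kernel of a minimal isogeny from a superspecial variety. It therefore suffices to prove the following claim.

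\emph{Claim.} If $f\colon A\to B$ is an isogeny with $\ker f\simeq\alpha_p$, then $D^b(A)\sim D^b(B)$.

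The first thing I would try is to build, from $f$, an isogeny of $A\times\widehat A$ onto $B\times\widehat B$ that respects the symplectic forms. The natural candidate pairs $f$ with $(\widehat f)^{-1}$, realised through $\widehat B\to\widehat A$ and a suitable quotient, and I expect it to be compatible with the symplectic forms up to a factor. Its kernel should be a Lagrangian-type subgroup scheme of $p$-torsion, so its degree is $p^2$; this is a principal isogeny, which is why the argument needs only a square degree. I would then identify $p$-primary Brauer classes $\alpha\in\operatorname{Br}(A)$ and $\beta\in\operatorname{Br}(B)$, coming from the $\alpha_p$-gerbe or $\mu_p$-gerbe attached to $\ker f$ via flat cohomology $H^2_{\mathrm{fl}}(A,\mu_p)\to\operatorname{Br}(A)$. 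The twisted Poincaré-type kernel would then give $D^b(A,\alpha)\simeq D^b(B,\beta)$.

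A more hands-on route to the same claim is to note that $D^b(B)$ is equivalent to $\ker(\widehat f)$-graded, i.e. $\ker f$-equivariant, objects on $A$. One would then produce the twisted equivalence from a Fourier--Mukai transform on $A\times\widehat A$ that descends along the $\alpha_p$-quotient, possibly only after twisting by a gerbe.

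This claim is the main obstacle. The prime-to-$p$ twisted derived Torelli theorem of \cite{LLT26} does not apply, because both the Brauer classes and the kernel are $p$-primary and infinitesimal. I would therefore construct the equivalence by hand, as a Fourier--Mukai functor whose kernel is a twisted sheaf on $A\times B$. I would verify that it is fully faithful and essentially surjective by reducing, via the $\ker f$-equivariant description above, to Mukai's computation for the Poincaré bundle.
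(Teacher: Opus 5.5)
Your reduction of the second assertion to the first (every superspecial variety of dimension $g\ge2$ is isomorphic to $E^g$) matches the paper. Factoring an isogeny from a superspecial variety into $\alpha_p$-quotients is also legitimate. But the whole content of the theorem is in your Claim, and you do not prove it: the symplectic isogeny, the gerbe-induced $p$-primary Brauer classes and the ``twisted Poincar\'e-type kernel'' are only announced.

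Worse, the mechanism you sketch cannot work as described.
\begin{itemize}
\item For $g=\dim A\ge2$ the numerology is wrong. An isogeny $\phi=\bigl(\begin{smallmatrix}a&b\\ c&d\end{smallmatrix}\bigr)$ from $A\times\widehat A$ to $B\times\widehat B$ with $\bigl(\begin{smallmatrix}\widehat d&-\widehat b\\ -\widehat c&\widehat a\end{smallmatrix}\bigr)\circ\phi=[p]$ has degree $p^{2g}$. So a degree-$p^2$ map is not symplectic up to a factor, and its kernel is not Lagrangian.
\item In dimension one your candidate does make sense, and there the Claim is false. Take a supersingular $E$ with $j(E)\notin\mathbb{F}_p$ (e.g.\ for $p=37$) and $f=F_E\colon E\to E^{(p)}$. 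Then $F_E\times F_{\widehat E}$ is symplectic up to $p$ (its symplectic inverse is $V_E\times V_{\widehat E}$), has degree $p^2$, and has Lagrangian kernel $\ker F_E\times\ker F_{\widehat E}$. Yet $\operatorname{Br}(E)=\operatorname{Br}(E^{(p)})=0$ and Fourier--Mukai partners of elliptic curves are isomorphic. Since $j(E^{(p)})=j(E)^p\neq j(E)$, we get $D^b(E)\not\sim D^b(E^{(p)})$.
\end{itemize}
So ``square-degree isogeny with Lagrangian infinitesimal kernel $\Rightarrow$ twisted equivalence'' is not a usable principle. Producing $p$-primary twisted equivalences from infinitesimal isogeny data is exactly what lies beyond the prime-to-$p$ theory of \cite{LLT26}, and you offer no substitute.

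The idea you are missing is that no isogeny has to be realised at all. Since $X$ is isogenous to a product of supersingular elliptic curves, it contains a supersingular elliptic curve $E_1$. Then $X\to Z:=X/E_1$ is a torsor under the constant abelian scheme $E_1\times Z\to Z$. Bhatt's relative Fourier--Mukai theorem \cite[Appendix, Theorem~A.1.1]{KrashenLieblich2008} gives $D^b(X)\simeq D^b(\widehat E_1\times Z,\beta)$ directly, with $Z$ supersingular of dimension $g-1$. Induction on $g$ then connects $X$ to some product of supersingular elliptic curves; at each step you take exterior products with the identity kernel on $\widehat E_1$, which is allowed because the Brauer classes may change from link to link. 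The hypothesis $g\ge2$ enters only through Ogus's theorem that all such products are isomorphic to $E^g$, which is precisely what fails in the dimension-one example above.

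For the converse direction, citing \cite{LLT26} is not enough either. Its twisted Torelli theorem in characteristic $p$ only covers prime-to-$p$ Brauer classes, while the classes in a derived-isogeny chain (e.g.\ Bhatt's $\beta$) may be $p$-primary. Your crystalline alternative would need a twisted crystalline Mukai vector for $p$-primary classes, compatible with convolution and Frobenius, which is not routine. The paper instead uses \cite{CanonacoStellari2007} to see that each twisted equivalence is of Fourier--Mukai type, and \cite[Corollary~3.4]{Olander2026} to conclude that the varieties in each link are isogenous.
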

The key observation is that every positive-dimensional supersingular abelian variety admits a fibration by supersingular elliptic curves over a supersingular abelian variety of lower dimension. Applying the relative Fourier--Mukai transform of Bhatt \cite[Appendix, Theorem~A.1.1]{KrashenLieblich2008} to these fibrations produces a chain of twisted derived equivalences connecting the original variety to a superspecial one.

For supersingular abelian surfaces, the description in terms of twistor lines provides a geometric approach to derived isogeny \cite{LiZou2026}. 
This suggests that a similar geometric picture may exist in higher dimensions, although we do not pursue this direction here.

We next turn from derived isogeny to derived equivalence.
For supersingular abelian surfaces $A$ and $B$ over $k$, one has
\[
D^b(A)\simeq D^b(B)
\quad\Longleftrightarrow\quad A\cong B.
\]
In higher dimensions, this no longer holds. Indeed, we have the following counterexample:

\begin{theorem}\label{thm:intro-nonde}
For every integer $g>2$, there exist supersingular abelian varieties
$A$ and $B$ over $k$ of dimension $g$ such that
\[
D^b(A)\simeq D^b(B),
\qquad
A^n\not\cong B^n
\]
for every integer $n>0$.
\end{theorem}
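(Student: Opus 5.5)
The plan is to take $B=\widehat{A}$. For every abelian variety $A$, the Poincar\'e bundle induces Mukai's equivalence $D^b(A)\simeq D^b(\widehat{A})$, and $\widehat{A}$ is supersingular whenever $A$ is. So it suffices to construct, for each $g>2$, a supersingular abelian variety $A$ of dimension $g$ with $A^n\not\cong \widehat{A}^n$ for all $n>0$.

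To separate the powers we use a finite invariant. An isomorphism $A^n\cong\widehat{A}^n$ induces an isomorphism of finite commutative group schemes $A[p]^n\cong \widehat{A}[p]^n$. The category of finite commutative group schemes over $k$ is abelian with all objects of finite length, hence Krull--Schmidt. By Dieudonn\'e theory it is anti-equivalent to finite-length modules over $k[F,V]$ with $FV=VF=0$ and semilinearity. Therefore $A[p]^n\cong\widehat{A}[p]^n$ forces $A[p]\cong\widehat{A}[p]$. Since $\widehat{A}[p]$ is the Cartier dual of $A[p]$, it is enough to find a supersingular $A$ whose $\mathrm{BT}_1$ is \emph{not self-dual}. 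In Dieudonn\'e terms this means the mod-$p$ module $(\overline{M},F,V)$ is not isomorphic to $(\overline{M}^\vee,V^\vee,F^\vee)$. In particular $A$ cannot be principally polarizable, and more precisely cannot carry any polarization of degree prime to $p$.

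Next we reduce to $g=3$. Suppose $A_0$ is a threefold with $A_0[p]$ not self-dual, and put $A=A_0\times E^{g-3}$ with $E$ a supersingular elliptic curve. Then $A[p]=A_0[p]\oplus E[p]^{g-3}$ and $\widehat{A}[p]=\widehat{A_0}[p]\oplus E[p]^{g-3}$, since $E[p]$ is self-dual. Krull--Schmidt cancels the common summand, so $A[p]\not\cong\widehat{A}[p]$.

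For $g=3$ we build $A_0$ as a quotient $E^3/H$ with $H\subset E^3[F]$ finite local, or equivalently via a Dieudonn\'e submodule $N$ with $pM\subset N\subset M=\mathbb{D}(E^3)$. Every such $N$ that is stable under $F$ and $V$ is realized by an abelian variety isogenous to $E^3$, hence supersingular. We will take $H\simeq\alpha_p$ embedded along a sufficiently general line of $\mathrm{Lie}$, or a slightly larger $\alpha_p^2$-type subgroup if needed. We then compute $\overline{N}=N/pN$ with its $F$ and $V$ explicitly in the basis $e_i,f_i$, where $Fe_i=Ve_i$ equals (up to the twist by $\sqrt{\ }$-units) $f_i$ for each factor.

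The main obstacle is this explicit $\mathrm{BT}_1$ computation together with a certificate of non-self-duality. The certificate should be an invariant that is exchanged by duality but not preserved. The first thing to try is the pair
\[
\dim_k(\ker F\cap V\overline{N})\quad\text{and}\quad \dim_k(\ker V\cap F\overline{N}),
\]
or more generally the canonical-filtration (Ekedahl--Oort word) data. Duality swaps $F$ and $V$ up to taking orthogonals, so these two quantities trade places (after the appropriate complement), and a choice of $H$ making them unequal proves the claim. If the simplest $\alpha_p$-quotient turns out self-dual, I would instead take $N$ generated by a non-symmetric choice, for example $\langle e_1,e_2,f_1,f_2,f_3,pe_3\rangle$ twisted by a generic element. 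I would then search among the finitely many Ekedahl--Oort configurations in dimension $3$ for a non-symmetric one. Since all EO strata of supersingular $\mathrm{BT}_1$'s of height $6$ without polarization are available, a non-self-dual one is expected to exist.
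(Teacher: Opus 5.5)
Your reduction steps are sound and form a route different from the paper's. These are: $B=\widehat A$ via Mukai's equivalence; the implication $A[p]\not\cong A[p]^D\Rightarrow A^n\not\cong\widehat A^{\,n}$ via Krull--Schmidt for finite commutative group schemes; and the passage from $g=3$ to all $g>2$ by adding the self-dual summand $E[p]^{g-3}$.

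The gap is that the only substantive input is never produced: a supersingular abelian threefold $A_0$ with $A_0[p]\not\cong A_0[p]^D$. You only say it is ``expected to exist''. The certificate you plan to try first cannot detect anything. In the mod-$p$ Dieudonn\'e module of any $\mathrm{BT}_1$ one has $\ker F=V\overline N$ and $\ker V=F\overline N$, so both $\dim_k(\ker F\cap V\overline N)$ and $\dim_k(\ker V\cap F\overline N)$ equal $g$. A search through unpolarized (Kraft) types does not settle the matter either. Non-self-dual types of height $6$, dimension $3$ and $p$-rank $0$ do exist abstractly: the words $FFVFVV$ and $FFVVFV$, which are exchanged by duality. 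What must be shown is that one of them is the $p$-torsion of a \emph{supersingular} abelian variety, and this is a real constraint. For instance, the type $\{FFV,FVV\}$ is minimal and forces slopes $1/3,2/3$ by Oort's minimality theorem. As written, the argument has a hole exactly where the theorem's content lies.

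Your first candidate does work, and this is how to close the gap. Let $M=M_{E^3}$ have basis $e_i,f_i$ with $Fe_i=Ve_i=f_i$ and $Ff_i=Vf_i=pe_i$. Put $M_X=M+W(k)\,y$ with $y=\frac1p(f_1+a_2f_2+a_3f_3)$, where $a_2,a_3$ are Teichm\"uller lifts of elements of $k$ such that $1,a_2,a_3$ are linearly independent over $\mathbb F_{p^2}$. This is a generic $\alpha_p$-quotient of $E^3$, of the same kind as in the paper. Work in the basis $\bar e_1,\bar e_2,\bar e_3,\bar y,\bar f_2,\bar f_3$ of $\overline M=M_X/pM_X$. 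One computes that $F^2\overline M=\langle\sum_{j=2,3}(a_j^{p^2}-a_j)\bar f_j\rangle$ and $V^2\overline M=\langle\sum_{j=2,3}(a_j^{p^{-2}}-a_j)\bar f_j\rangle$ are two distinct lines. Meanwhile $\ker F^2=\ker V^2$ is the hyperplane of vectors with no $\bar y$-component.

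In the dual module one has $F^2\overline N^\vee=(\ker V^2)^\perp$, $V^2\overline N^\vee=(\ker F^2)^\perp$, $\ker F^2=(V^2\overline N)^\perp$ and $\ker V^2=(F^2\overline N)^\perp$. Hence the invariant $I(\overline N)=\dim(F^2\overline N+V^2\overline N)+\dim(\ker F^2\cap\ker V^2)$ satisfies $I(\overline N^\vee)=4g-I(\overline N)$, so $I=2g$ for any self-dual module. Here $I(\overline M)=2+5=7\neq 6$, so $X[p]\not\cong X[p]^D$.

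Completed this way, your route proves the stronger local statement $A[p]^n\not\cong\widehat A[p]^{\,n}$, at the cost of a Dieudonn\'e computation and a separate reduction to $g=3$. The paper instead argues globally and uniformly in $g$. An isomorphism $X^n\cong\widehat{X^n}$ would give an endomorphism of $E^{gn}$ of degree $p^{2n}$ killing $\ker(F_E)^{gn}$, which has order $p^{gn}$.
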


A very special property for supersingular abelian surface is that it always admits a principal polarization. 
So one may wonder whether derived equivalence determines the underlying supersingular abelian variety when they admit principal polarizations. 
Here we can give the following rigidity result, which is also valid in characteristic two.

\begin{theorem}\label{thm:intro-ppav}
Let $X$ and $Y$ be supersingular principally polarized abelian varieties over $k$ of the same dimension $g\geq2$.
Then
\[
D^b(X)\simeq D^b(Y)
\quad\Longleftrightarrow\quad X\cong Y
\]
provided that at least one of the following conditions holds:
\begin{enumerate}[label=\textup{(\roman*)},leftmargin=*]
\item $g=3$;
\item $X[p^\infty]$ is indecomposable, and either $g$ is odd or $p\equiv3\pmod4$;
\item $X[p^\infty]$ has the $p$-divisible group of a supersingular elliptic curve as a direct factor.
\end{enumerate}

\end{theorem}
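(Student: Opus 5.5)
The plan is to use the Orlov--Polishchuk criterion together with principal polarizations to turn a symplectic isomorphism $X\times\widehat X\cong Y\times\widehat Y$ into an isomorphism $X\times X\cong Y\times Y$. From there the question becomes whether $X\times X\cong Y\times Y$ forces $X\cong Y$. We then answer that question by a lattice or Dieudonné-module cancellation argument in each of the three cases.

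\textbf{Step 1: reduction to hermitian forms.} The implication $\Leftarrow$ is trivial. For $\Rightarrow$, the Orlov--Polishchuk theorem recalled in the introduction gives a symplectic isomorphism $f\colon X\times\widehat X\to Y\times\widehat Y$. The principal polarizations $\lambda_X,\lambda_Y$ identify $\widehat X\cong X$ and $\widehat Y\cong Y$, so $f$ becomes an isomorphism $X^2\cong Y^2$. It is symplectic with respect to the forms built from $\lambda_X$ and $\lambda_Y$, so it preserves a hermitian structure on $X^2$ and $Y^2$.

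Write $Y\cong$ a quotient determined by an $\operatorname{End}(X)$-module $M=\operatorname{Hom}(X,Y)$. For $g\ge2$ the supersingular abelian variety $X$ determines a maximal order $\mathcal O$ in the quaternion algebra $B_{p,\infty}$, and rank-$g$ modules over $\mathcal O$ enter through the superspecial comparison. It is cleanest to work with polarized Dieudonné modules, that is, with $X[p^\infty]$, together with the prime-to-$p$ Tate modules. Away from $p$ everything is principally polarized and unimodular, so the two sides agree. The problem therefore splits into a local problem at $p$ and a global genus or class problem.

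\textbf{Step 2: the global part.} By strong approximation for the quaternionic unitary group, or equivalently by the fact that principally polarized supersingular abelian varieties in the same genus at $p$ are classified by a double coset, the issue reduces to the following. We must show that $X[p^\infty]\oplus X[p^\infty]\cong Y[p^\infty]\oplus Y[p^\infty]$, as quasi-polarized $p$-divisible groups via $f$, forces $X[p^\infty]\cong Y[p^\infty]$ compatibly with the polarizations up to the relevant similitude. We must also show that the two global lattices then coincide. For the global lattice statement I would use that $X$ and $Y$ are isogenous to $E^g$, which makes $\operatorname{Hom}(E^g,X)$ and $\operatorname{Hom}(E^g,Y)$ hermitian $\mathcal O$-lattices. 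Cancellation for indefinite-enough quaternion hermitian lattices of rank $\ge2$ then gives $X\cong Y$ from $X^2\cong Y^2$ once the local data at $p$ match.

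\textbf{Step 3: the local part, case by case.} This is where the hypotheses enter, and it is the main obstacle.

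In case (iii), a factor $E[p^\infty]$ splits off with its polarization. Adding an orthogonal unimodular summand gives Witt-type cancellation, and the remaining local cancellation follows by induction.

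In case (ii), indecomposability means that $\operatorname{End}(X[p^\infty])$ is local. Krull--Schmidt then gives $Y[p^\infty]\cong X[p^\infty]$. The remaining task is to match the quasi-polarizations, which amounts to a unitary versus similitude sign or discriminant obstruction. I expect this obstruction to be a class in $\mathbb Z_p^\times/\mathrm{Nm}$ or a $\pm1$ spinor-type invariant. It vanishes when $g$ is odd, since the discriminant can be rescaled by a $g$-th power. It also vanishes when $p\equiv3\pmod4$, since $-1$ is then a non-norm or a non-square of the right kind.

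In case (i), $g=3$, the $p$-divisible group is either indecomposable, which is covered by (ii) because $g$ is odd, or has an $E[p^\infty]$ factor, which is covered by (iii). Checking this dichotomy for $a$-numbers $1,2,3$ via Li--Oort's classification of supersingular Dieudonné modules in dimension $3$ is the step I would verify explicitly.

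The most delicate point, and the place I would spend the most effort, is computing the local polarization obstruction in case (ii). My first attempt would be to compute the automorphism group of an indecomposable polarized Dieudonné module and determine its image under the similitude character.
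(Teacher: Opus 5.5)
Your Step~2 is where the argument breaks. The hermitian forms coming from polarizations are positive definite, so the quaternionic unitary group is compact at infinity. Strong approximation and lattice cancellation therefore do not hold there; polarized class numbers are in general larger than one. Moreover, the symplectic condition on $f$ only says that $f$ preserves the skew form on $X^2$, $Y^2$ built from $\lambda_X,\lambda_Y$, not the product polarization.

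Since you only need an unpolarized isomorphism, the right global input is the unpolarized classification (Ogus; equivalently strong approximation for $\operatorname{SL}_g$ over the quaternion algebra, $g\ge 2$). Once $X[p^\infty]\simeq Y[p^\infty]$, which your Krull--Schmidt step does give, one has $Y\cong X$ if and only if $\deg(h)\in\operatorname{Nrd}(R_X^\times)$ for a prime-to-$p$ isogeny $h\colon X\to Y$. So ``local data at $p$ match $\Rightarrow X\cong Y$'' is not available. The obstruction is this reduced-norm class, not a local invariant of the quasi-polarized $p$-divisible group.

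Nothing in your Step~1 controls this class. The principal polarizations alone give $s=\lambda_X^{-1}\widehat h\lambda_Y h\in R_X^\times$ with $\operatorname{Nrd}(s)=\deg(h)^2$. When $R_X$ is local, $X^2\cong Y^2$ says nothing more, because $\operatorname{Nrd}(\operatorname{GL}_2(R_X))=\operatorname{Nrd}(R_X^\times)$. You therefore only know that the class of $\deg(h)$ is $2$-torsion in $\mathbb Z_p^\times/\operatorname{Nrd}(R_X^\times)$, and your vanishing claims are unsupported. For instance, for $g$ odd, knowing that $\deg(h)^2$ and $\deg(h)^{2g}$ lie in $\operatorname{Nrd}(R_X^\times)$ yields only $\deg(h)^2$, not $\deg(h)$.

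The missing idea is to extract more from the Fourier--Mukai kernel than $X^2\cong Y^2$. If $M_X$ is indecomposable, its endomorphism ring is local, so some entry of Orlov's matrix $f$ is an isomorphism on $p$-divisible groups. After composing with standard transforms you may assume $b\colon\widehat X\to Y$ is a prime-to-$p$ isogeny. Then $\pi\colon(x,\xi)\mapsto(x,a(x)+b(\xi))$ is finite flat of degree $\deg(b)$. Orlov's Lemma~3.1, $\pi_*L\cong\mathcal E\otimes\mathbf Li_0^*\mathcal E^\vee$, then gives $\deg(b)=\operatorname{rk}(\mathcal E)^2$.

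With $\deg(h)=m^2$ the norm argument closes:
\begin{itemize}
\item for $g$ odd, $[m]s^{-(g-1)/2}$ has reduced norm $m^2$;
\item for $p\equiv3\pmod4$, $(\mathbb Z_p^\times)^{2g}\subseteq\operatorname{Nrd}(R_X^\times)$ and $[U:U^g]$ is odd for $U=(\mathbb Z_p^\times)^2$, so the $2$-torsion class of $m^2$ in $U/(U\cap\operatorname{Nrd}(R_X^\times))$ is trivial.
\end{itemize}

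In case (iii) no Witt cancellation is needed. Note also that the hypothesis is an unpolarized direct factor, which need not split off with its polarization. The block $\operatorname{End}(E[p^\infty])^\times=\mathcal O_D^\times$ already has reduced norm onto $\mathbb Z_p^\times$. Hence $\operatorname{Nrd}(R_X^\times)=\mathbb Z_p^\times$, the unpolarized leaf is a single point, and Krull--Schmidt finishes. Your $a$-number dichotomy for case (i) is the right one.
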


The proof combines Orlov's description of derived equivalences with a reduced-norm criterion for supersingular abelian varieties.
The remaining even-dimensional cases are discussed in Remark~\ref{rem:even-norm}.

\subsection*{Acknowledgements}
We thank Haitao Zou, Chia-Fu Yu, and Jiangwei Xue for helpful discussions. 
We thank Haitao Zou for carefully reading
an earlier draft of this paper and for many helpful
comments and suggestions.

\subsection*{History and AI disclosure}
The main question of this paper grew out of the second author's earlier joint work with Haitao Zou \cite{LiZou2026}.
The idea of constructing derived isogenies using abelian fibrations arose during our work on the supersingular Tate conjecture for hyperk\"ahler varieties \cite{FHL26}.
This method is central to our proof.
We used LLM to help locate the relevant result in Bhatt's appendix to \cite{KrashenLieblich2008} and to improve the writing.

\section{Preliminaries and notation}\label{sec:preliminaries}

\subsection{}
Unless otherwise specified, $k$ is an algebraically closed field of characteristic $p>0$.
All varieties and morphisms are defined over $k$, and products are taken over $k$.
For an $S$-scheme $X$ and a morphism $T\to S$, we write $X_T=X\times_S T$.

\subsection{}
We write $D^b(X)$ for the bounded derived category of coherent sheaves on $X$. 
All derived equivalences are $k$-linear and exact.
For a smooth projective variety $X$, its cohomological Brauer group is
\[
\operatorname{Br}(X)
=H^2_{\mathrm{\acute{e}t}}(X,\mathbb{G}_m)_{\mathrm{tors}}.
\]
For $\alpha\in\operatorname{Br}(X)$, we denote by $D^b(X,\alpha)$ the bounded derived category of $\alpha$-twisted coherent sheaves.
An abelian variety $Y$ is a \emph{Fourier--Mukai partner} of $X$ if $D^b(X)\simeq D^b(Y)$.

\subsection{}
Let $X$ and $Y$ be abelian varieties, with duals $\widehat{X}$ and $\widehat{Y}$.
An isomorphism
\[
f=
\begin{pmatrix}
a&b\\
c&d
\end{pmatrix}
:X\times\widehat{X}\xrightarrow{\sim}Y\times\widehat{Y}
\]
is \emph{symplectic} if
\[
f^{-1}=
\begin{pmatrix}
\widehat{d}&-\widehat{b}\\
-\widehat{c}&\widehat{a}
\end{pmatrix}.
\]
Such isomorphisms are called \emph{isometric} in \cite{Orlov2002}.

\subsection{}
For a field $L$ of characteristic $p$, let
\[
\alpha_{p,L}
=\ker\bigl(F:\mathbb{G}_{a,L}\to\mathbb{G}_{a,L}^{(p)}\bigr).
\]
be additive group scheme. 
We write $\alpha_p=\alpha_{p,k}$.

\subsection{}

For an abelian variety $X$, its dual is denoted by $\widehat{X}$. 
Its $a$-number is
\[
a(X)=\dim_k\operatorname{Hom}(\alpha_p,X[p]).
\]
We denote by $M_X$ the covariant Dieudonn\'e module of $X[p^\infty]$.

\subsection{} An abelian variety is \emph{supersingular} if all slopes of its Newton polygon are equal to $1/2$.
Over $k$, this is equivalent to being isogenous to a product of supersingular elliptic curves \cite[Theorem~4.2]{Oort1974}.

\subsection{}
For a supersingular abelian variety $X$ of dimension $g$, set
\[
R_X=\operatorname{End}(X)\otimes_{\mathbb{Z}}\mathbb{Z}_p.
\]
Let $D$ be the quaternion division algebra over $\mathbb{Q}_p$.
Then
\[
R_X\otimes_{\mathbb{Z}_p}\mathbb{Q}_p\cong M_g(D).
\]
We denote the reduced norm of this algebra by $\operatorname{Nrd}$.
After extension to a splitting field $L$, it becomes the determinant under an identification
\[
M_g(D)\otimes_{\mathbb{Q}_p}L\cong M_{2g}(L).
\]
In particular,
\[
\operatorname{Nrd}([m])=m^{2g},
\qquad
\operatorname{Nrd}(f)=\deg(f)
\]
for every nonzero integer $m$ and every isogeny $f\in\operatorname{End}(X)$.
We use the same notation for its restriction
\[
\operatorname{Nrd}:R_X^\times\longrightarrow\mathbb{Z}_p^\times.
\]

\section{Proof of Theorem \ref{thm:intro-pair}}
\label{sec:proof-main}

\subsection{}
As before, $k$ is an algebraically closed field of characteristic $p>0$. The following proposition is a direct consequence of Bhatt's theorem, applied to the torsor $X\to Y$ under the abelian scheme $C\times Y\to Y$.

\begin{proposition}[{\cite[Appendix, Theorem~A.1.1]
{KrashenLieblich2008}}]
\label{prop:abelian-extension}
Let
\[
0\longrightarrow C\longrightarrow X\longrightarrow Y
\longrightarrow 0
\]
be a short exact sequence of abelian varieties over $k$.
Then there exist a class $\beta\in\operatorname{Br}(\widehat{C}\times Y)$ and a $Y$-linear Fourier--Mukai equivalence
\[
D^b(X)\simeq D^b(\widehat{C}\times Y,\beta).
\]
\end{proposition}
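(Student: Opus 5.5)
We view $X\to Y$ as a torsor under the abelian scheme $\mathcal{C}=C\times Y\to Y$ and apply Bhatt's theorem \cite[Appendix, Theorem~A.1.1]{KrashenLieblich2008}. That theorem says: if $\mathcal{A}\to S$ is an abelian scheme and $P\to S$ is an $\mathcal{A}$-torsor, then there is a Brauer class $\beta$ on the dual abelian scheme $\widehat{\mathcal{A}}\to S$ and an $S$-linear Fourier--Mukai equivalence $D^b(P)\simeq D^b(\widehat{\mathcal{A}},\beta)$. The kernel is a twisted relative Poincaré sheaf, and $\beta$ is the image of the torsor class under the natural map $H^1(S,\mathcal{A})\to \operatorname{Br}(\widehat{\mathcal{A}})$. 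So the plan has three steps: exhibit the torsor structure, identify the dual abelian scheme, and apply the theorem.

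\emph{Step 1: the torsor structure.} Let $\pi\colon X\to Y$ be the quotient map. The morphism
\[
C\times X\to X,\qquad (c,x)\mapsto c+x,
\]
is compatible with $\pi$, since $\pi(c+x)=\pi(x)$. It therefore defines an action of the constant abelian scheme $C\times Y$ over $Y$ on $X$ over $Y$. The map
\[
C\times X\to X\times_Y X,\qquad (c,x)\mapsto (c+x,x),
\]
is an isomorphism, because $X\times_Y X\cong X\times\ker(\pi)=X\times C$ via $(x,x')\mapsto (x',x-x')$. Moreover $\pi$ is smooth and surjective: it is a surjective homomorphism of abelian varieties whose kernel $C$ is an abelian variety, hence smooth. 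Since $\pi$ is fppf (indeed smooth), it admits sections étale-locally on $Y$. Thus $X$ is an étale torsor under $C\times Y$.

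\emph{Step 2: the dual abelian scheme.} The dual of the constant abelian scheme $C\times Y\to Y$ is $\widehat{C}\times Y\to Y$, because forming the dual abelian scheme (the Picard scheme of degree-zero bundles) commutes with base change.

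\emph{Step 3: conclusion.} Bhatt's theorem now gives a class $\beta\in\operatorname{Br}(\widehat{C}\times Y)$ and a $Y$-linear equivalence $D^b(X)\simeq D^b(\widehat{C}\times Y,\beta)$. The untwisted side corresponds to the torsor $X$ itself. The sign and side conventions in \cite{KrashenLieblich2008}, namely which side carries the twist, might need a quick check. If the theorem is stated with the twist on the torsor side, the torsor class is the obstruction to a Poincaré sheaf on $X\times_Y\widehat{\mathcal{C}}$. That sheaf is then $\beta$-twisted with $\beta$ pulled back from the $\widehat{\mathcal{C}}$ factor, which gives exactly the form stated.

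The only substantive point, and thus the main obstacle, is verifying Bhatt's hypotheses. In particular, we need that $\operatorname{Br}$ here means the cohomological Brauer group $H^2_{\mathrm{\acute et}}(-,\mathbb{G}_m)_{\mathrm{tors}}$ as in our conventions. This holds because $\widehat{C}\times Y$ is smooth and projective, so its Brauer group equals its cohomological Brauer group by Gabber/de Jong, and the twisted category is well defined. We also need that the theorem places no restriction on characteristic or on the torsion order of the class. Bhatt's argument is characteristic-free and allows the torsor to have $p$-power order. This matters for the supersingular applications.
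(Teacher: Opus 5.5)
Your proposal is correct and follows the paper's approach: the paper also obtains the statement directly from Bhatt's theorem, applied to $X\to Y$ viewed as a torsor under the constant abelian scheme $C\times Y\to Y$, whose dual is $\widehat{C}\times Y$. Your checks of the torsor structure, the dual abelian scheme, and the Brauer-group conventions fill in details the paper leaves implicit.
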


\subsection{}
Recall that an abelian variety over $k$ is superspecial if it is isomorphic to a product of supersingular elliptic curves.
By a theorem of Oort \cite[Theorem~4.2]{Oort1974}, every supersingular abelian variety over the algebraically closed field $k$ is isogenous to such a product, and hence to a superspecial abelian variety.

\subsection{Proof of Theorem~\ref{thm:intro-pair}}

Fix a supersingular elliptic curve $E$ over $k$.
Suppose first that $X$ is derived isogenous to a superspecial abelian variety.
By \cite[Theorem~1.1]{CanonacoStellari2007}, each twisted derived equivalence in the chain is of Fourier--Mukai type.
Hence \cite[Corollary~3.4]{Olander2026} implies that the underlying abelian varieties at each step are isogenous.
Thus $X$ is isogenous to a superspecial abelian variety and is therefore supersingular.

Conversely, we prove by induction on $g\geq 2$ that every supersingular abelian variety of dimension $g$ is derived isogenous to a product of supersingular elliptic curves.

Choose an isogeny from a product of supersingular elliptic curves to $X$.
The image of any elliptic factor is an elliptic abelian subvariety $E_1\subset X$, since the restriction has finite kernel. 
Moreover, $E_1$ is supersingular, being isogenous
to that factor.
Set $Z=X/E_1$. Then
\[
0\longrightarrow E_1\longrightarrow X\longrightarrow Z
\longrightarrow 0
\]
is exact, and $Z$ is supersingular of dimension $g-1$.
By Proposition~\ref{prop:abelian-extension}, there is a class $\beta_1\in\operatorname{Br}(\widehat{E}_1\times Z)$ such that
\[
D^b(X)\simeq D^b(\widehat{E}_1\times Z,\beta_1).
\]

If $g=2$, then $Z$ is a supersingular elliptic curve,
so $\widehat{E}_1\times Z$ is a superspecial abelian surface.
The displayed equivalence therefore proves the base case.

Now suppose that $g>2$.
By the induction hypothesis, $Z$ is derived isogenous to a product $E_2\times\cdots\times E_g$ of supersingular elliptic curves.
Taking the exterior product of the Fourier--Mukai kernels realizing this derived isogeny with the kernel of the identity functor on $\widehat{E}_1$ shows that $\widehat{E}_1\times Z$ is derived isogenous to
\[
\widehat{E}_1\times E_2\times\cdots\times E_g.
\]
By transitivity, $X$ is derived isogenous to this product,
completing the induction.

Finally,
by \cite[Corollary~6.7]{Ogus1979}, every superspecial abelian variety of dimension $g\geq 2$ over $k$ is isomorphic to $E^g$.
Thus every supersingular abelian variety of dimension $g\geq 2$ is derived isogenous to $E^g$.
Since derived isogeny is an equivalence relation, any two supersingular abelian varieties of the same dimension $g\geq 2$ are therefore derived isogenous.\qed

\section{Examples of nontrivial Fourier--Mukai partners}
\label{sec:duality-polarizations}

\subsection{} We construct supersingular abelian varieties whose positive powers are not isomorphic to the dual.
\begin{theorem}\label{thm:nonselfdual}
For every integer $g>2$, there exists a supersingular abelian variety $X/k$ of dimension $g$ such that
\[
    X^n \not\cong \widehat{X^n}
    \qquad\text{for every integer }n\geq 1.
\]
In particular, $X\not\cong \widehat{X}$.
\end{theorem}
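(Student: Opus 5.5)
The plan is to find an isomorphism invariant of supersingular abelian varieties that duality reverses and that is additive under products. This invariant will come from the covariant Dieudonn\'e module $M_X$. For a supersingular Dieudonn\'e module $M$ (a $W(k)$-lattice stable under $F,V$ in an isocrystal of slope $1/2$), I would consider two superspecial modules: the \emph{superspecial hull} $S_0(M)$, the smallest $F,V$-stable lattice $N\supseteq M$ with $FN=VN$, and the \emph{superspecial core} $S^0(M)$, the largest such lattice $N\subseteq M$. Both exist and are functorial, since they are obtained by iterating $N\mapsto N+F^{-1}VN$ (respectively $N\mapsto N\cap F^{-1}VN$). Set
\[
s_0(X)=\operatorname{length}\bigl(S_0(M_X)/M_X\bigr),\qquad s^0(X)=\operatorname{length}\bigl(M_X/S^0(M_X)\bigr).
\]

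I would carry out three steps, in this order.

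\begin{enumerate}[label=\textup{(\arabic*)},leftmargin=*]
\item Check that $s_0$ and $s^0$ depend only on the isomorphism class of $X$. Check also that they are additive: $s_0(X\times Y)=s_0(X)+s_0(Y)$, and likewise for $s^0$. Additivity follows because the iterative constructions commute with direct sums of Dieudonn\'e modules.
\item Check that duality swaps the two invariants: $s_0(\widehat X)=s^0(X)$ and $s^0(\widehat X)=s_0(X)$. The module $M_{\widehat X}$ is the $W(k)$-dual $M_X^\vee$, up to a Tate twist, with $F$ and $V$ interchanged. Dualizing reverses inclusions of lattices and preserves the condition $FN=VN$. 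So it sends the hull to the core, and lengths of quotients are preserved.
\item Produce, for every $g\ge 3$, a supersingular $X$ of dimension $g$ with $s_0(X)\neq s^0(X)$.
\end{enumerate}

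Granting these three steps, the theorem follows. An isomorphism $X^n\cong\widehat{X^n}=\widehat X^{\,n}$ would give $n\,s_0(X)=n\,s^0(X)$, which is a contradiction.

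For the construction in step (3), first take $g=3$. Let $S$ be the Dieudonn\'e module of $E^3$, so that $F=V$ on $S$ and $F^2=p$. Choose $x\in S$ whose image $\bar x\in S/FS\cong k^3$ is generic, and set $M=FS+W(k)x$. The module $M$ is $F,V$-stable because $Fx,Vx\in FS$, and $\operatorname{length}(M/FS)=1$. By the Dieudonn\'e correspondence, $M$ is realized by a supersingular abelian threefold $X$ isogenous to $E^3$.
\begin{itemize}
\item Superspecial lattices between $FS$ and $S$ correspond to $\mathbb{F}_{p^2}$-rational subspaces of $S/FS$, for the semilinear structure induced by $F^{-1}V$. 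Hence $S_0(M)$ corresponds to the smallest rational subspace containing the line $k\bar x$. For generic $\bar x$ (coordinates $\mathbb{F}_{p^2}$-linearly independent), this is all of $S/FS$. So $S_0(M)=S$ and $s_0(X)=2$.
\item A superspecial lattice $N\subseteq M$ with $N\supsetneq FS$ would correspond to a rational subspace contained in $k\bar x$, which is impossible. Any superspecial $N\subseteq M$ not containing $FS$ can be handled by comparing with $FS+N$, or by the dual argument, giving $S^0(M)=FS$ and $s^0(X)=1$.
\end{itemize}
Since $1\ne 2$, this settles $g=3$. For $g>3$, take $X_3\times E^{g-3}$; the factor $E^{g-3}$ is superspecial and contributes $0$ to both invariants.

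The main obstacle is the rigorous identification of hull and core in step (3). In particular, one must show that no superspecial sublattice of $M$ other than those inside $FS$ can be larger than $FS$. I would first try to prove $S^0(M)=FS$ by applying the hull computation to the dual module $M^\vee$, which has the same shape with the roles of line and hyperplane exchanged. This reduces the claim to the genericity of a hyperplane in $k^3$.
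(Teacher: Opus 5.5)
Your argument is correct. It proves the theorem by a different route from the paper, although the example is essentially the same.

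\textbf{The example.} The paper takes $X=E^g/H$, where $H\cong\alpha_p$ sits in $\ker(F_E)^g$ via $\mathbb{F}_{p^2}$-linearly independent $a_1,\dots,a_g$. On Dieudonn\'e modules this is a lattice $M$ with $FS\subset M\subset S$, where $S$ is superspecial and $M/FS$ is a generic line in $S/FS\cong k^g$. That is your construction, done directly in dimension $g$ (giving $s_0=g-1$ and $s^0=1$) rather than via $X_3\times E^{g-3}$.

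\textbf{The paper's argument.} The paper avoids Dieudonn\'e theory entirely. From an isomorphism $u\colon X^n\to\widehat{X^n}$ it forms $f=\lambda_n^{-1}\circ\widehat{q_n}\circ u\circ q_n\in\operatorname{End}(E^{gn})$, which has degree $p^{2n}$ and kills $H^n$. It then uses that endomorphisms of $E$ act on $\ker(F_E)\cong\alpha_p$ through $\mathbb{F}_{p^2}$. From this it shows that any homomorphism $E^g\to E$ killing $H$ kills $\ker(F_E)^g$. Hence $p^{gn}\mid p^{2n}$, which is impossible for $g>2$.

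\textbf{How the two compare.} That lemma is the geometric form of your hull computation. It says every isogeny from $X^n$ to a superspecial abelian variety has degree at least $p^{n(g-1)}$, which is your $s_0(X^n)=n(g-1)$. On the other hand, $\widehat{q_n}\circ u$ would be such an isogeny of degree $p^n$, reflecting $s_0(\widehat X)=s^0(X)=1$.
\begin{itemize}
\item The paper's version is shorter and more elementary. It needs only the $\mathbb{F}_{p^2}$-structure on $\operatorname{End}(E)$ and a degree count.
\item Yours isolates an additive invariant of the $p$-divisible group that duality interchanges. This gives more at no extra cost, e.g.\ $X^n[p^\infty]\not\cong\widehat{X}^{\,n}[p^\infty]$. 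It also applies to any supersingular $X$ with $s_0(X)\neq s^0(X)$.
\end{itemize}

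\textbf{Two small remarks.} First, the step you single out as the main obstacle is immediate. Your iteration gives $S^0(M)=\bigcap_{j\geq0}(F^{-1}V)^jM$, so the core contains every superspecial Dieudonn\'e sublattice of $M$, in particular $FS$. Thus $FS\subseteq S^0(M)\subseteq M$. Since $M/FS$ has length one and $k\bar x$ is not $\mathbb{F}_{p^2}$-rational, $S^0(M)=FS$.

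Second, ``$F=V$ on $S$'' cannot hold literally, since $F$ and $V$ are semilinear with respect to different powers of Frobenius. What you actually use is $FS=VS$.
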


\begin{proof}
It suffices to construct  examples over $k=\overline{\mathbb{F}}_p$. 
Choose a supersingular elliptic curve $E_0/\mathbb{F}_{p^2}$ whose $p^2$-power Frobenius endomorphism is $[-p]$, and set
\[
E=E_0\times_{\mathbb{F}_{p^2}}k.
\]
Every endomorphism of $E$ is defined over $\mathbb F_{p^2}$;
see \cite[Section~4]{Yu2020}.
Since $E_0$ is supersingular, we may choose an isomorphism
\[
    \alpha_{p,\mathbb F_{p^2}}
    \xrightarrow{\sim} \ker(F_{E_0})
\]
over $\mathbb F_{p^2}$.
Its base change gives an identification
\[
    \iota:\alpha_p\xrightarrow{\sim}\ker(F_E),
\]
where $F_E:E\to E^{(p)}$ is the relative Frobenius.
Note that every endomorphism of $E$ preserves $\ker(F_E)$. 
Since the endomorphism and $\iota$ are defined over $\mathbb{F}_{p^2}$, restriction induces a ring homomorphism
\[
\operatorname{End}_k(E)
\longrightarrow
\operatorname{End}_{\mathbb{F}_{p^2}}
(\alpha_{p,\mathbb{F}_{p^2}})
=\mathbb{F}_{p^2}.
\]
Every endomorphism of $E$ acts on $\ker(F_E)$ through multiplication by a scalar in $\mathbb{F}_{p^2}$.

Choose $a_1,\ldots,a_g\in k$ linearly independent over $\mathbb{F}_{p^2}$, and let $H\subset E^g$ be the subgroup scheme defined by the embedding
\[
\alpha_p\hookrightarrow E^g,\qquad
t\longmapsto
\bigl(\iota(a_1t),\ldots,\iota(a_gt)\bigr).
\]
Since $H\cong\alpha_p$, the quotient isogeny
\[
q:E^g\longrightarrow X:=E^g/H
\]
has degree $p$. 
In particular, $X$ is a supersingular abelian variety of dimension $g$.

Next, observe that any homomorphism $h:E^g\to E$ which annihilates $H$ also annihilates $\ker(F_E)^g$.
Indeed, let $h_i\in\operatorname{End}_k(E)$ be the restriction of $h$ to the $i$-th factor and we have
\[
h(x_1,\ldots,x_g)=\sum_{i=1}^g h_i(x_i).
\]
The restriction of $h_i$ to $\ker(F_E)$ is multiplication by some $b_i\in\mathbb F_{p^2}$.
Thus $h|_H=0$ implies
\[
    \sum_{i=1}^g b_i a_i=0.
\]
The linear independence of the $a_i$ forces every $b_i$ to vanish.

Now fix $n\geq 1$ and suppose that there is an isomorphism
\[
    u:X^n\xrightarrow{\sim}\widehat{X^n}.
\]
Let $q_n=q^{\times n}:E^{gn}\to X^n$, and let $\lambda_n:E^{gn}\xrightarrow{\sim}\widehat{E^{gn}}$ be the product principal polarization.
Then
\[
    f=\lambda_n^{-1}\circ\widehat{q_n}\circ u\circ q_n
\]
is an endomorphism of $E^{gn}$. 
It is an isogeny of degree
\[
    \deg(f)=\deg(q_n)^2=p^{2n},
\]
and it annihilates $\ker(q_n)=H^n$.

Write $f=(f_{ij})$, where
\[
f_{ij}\colon E^g\longrightarrow E,
\qquad 1\leq i\leq gn,\quad 1\leq j\leq n,
\]
are its component homomorphisms with respect to
$(E^g)^n$.
Since $f$ annihilates $H^n$, each $f_{ij}$ annihilates $H$.
By the preceding observation, each $f_{ij}$ therefore annihilates $\ker(F_E)^g$.
Consequently,
\[
\ker(F_E)^{gn}\subseteq\ker(f).
\]
Comparing the orders of these finite group schemes gives
\[
p^{gn}\mid\deg(f)=p^{2n},
\]
which contradicts $g>2$.

\end{proof}

\begin{remark}
Thm~\ref{thm:nonselfdual} constructs supersingular abelian varieties of dimension $g$, for every $g>2$, that do not admit principal polarizations.
As Chia-Fu Yu pointed out to us, the existence of such varieties in every dimension at least three also follows from a dimension comparison.
Indeed, unpolarized supersingular isogeny leaves in dimension $d$ have dimension $d(d-1)/2$, whereas the supersingular locus in the moduli space of principally polarized abelian varieties has dimension $\lfloor d^2/4\rfloor$; see \cite[Sections~6--7]{Oort2009Foliations}.
Since
\[
\left\lfloor\frac{d^2}{4}\right\rfloor
<
\frac{d(d-1)}{2}
\qquad(d\geq3),
\]
principal polarizability imposes a genuine restriction in higher dimensions.
\end{remark}

\subsection{Proof of Theorem \ref{thm:intro-nonde}} Combining Theorem~\ref{thm:nonselfdual} with Mukai's equivalence \cite[Proposition~2.4]{Orlov2002}
produces nonisomorphic derived-equivalent examples in every dimension greater than two. 
This proves Theorem \ref{thm:intro-nonde}.

\subsection{} Let us consider some application of  Theorem~\ref{thm:nonselfdual}. 
Using Zarhin's trick, we can show that admitting a principal polarization is not preserved by derived equivalence.

\begin{proposition}\label{prop:nonppav}
   For every integer $g>2$, there exist supersingular abelian varieties $X$ and $Y$ of dimension $8g$ such that
\[
D^b(X)\simeq D^b(Y),
\]
where $X$ admits a principal polarization and $Y$ does not.
\end{proposition}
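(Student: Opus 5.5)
Take $X_0$ of dimension $g>2$ as in Theorem~\ref{thm:nonselfdual}, so that $X_0^n\not\cong\widehat{X_0^n}$ for every $n\geq1$. By Zarhin's trick, $Z:=(X_0\times\widehat{X_0})^4$ admits a principal polarization. It has dimension $8g$, so we set $X=Z$.

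For $Y$ we want a Fourier--Mukai partner of $Z$ that cannot be principally polarized. The natural candidate is $Y=X_0^8$. This is derived equivalent to $X_0^4\times\widehat{X_0}^4\cong Z$: apply Mukai's equivalence \cite[Proposition~2.4]{Orlov2002} $D^b(X_0)\simeq D^b(\widehat{X_0})$ on four of the eight factors, and take the exterior product with the identity kernel on the other four. The resulting kernel is a product of kernels, hence induces an equivalence.

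A principal polarization on $Y$ is in particular an isomorphism $X_0^8\xrightarrow{\sim}\widehat{X_0^8}$. This is excluded by Theorem~\ref{thm:nonselfdual} with $n=8$. Hence $Y$ admits no principal polarization, while $X=Z$ does, and $D^b(X)\simeq D^b(Y)$. Both varieties are supersingular of dimension $8g$, being isogenous to products of copies of $X_0$ and $\widehat{X_0}$, which are supersingular.

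The only point needing care is Zarhin's trick: for any abelian variety $A$, the variety $(A\times\widehat{A})^4$ is principally polarizable. This is valid in any characteristic, since the proof uses only a polarization $\lambda$ of $A$ and a $4\times4$ integer matrix $M$ with $M^tM=\deg\text{-related}$ scalar, built from Lagrange's four-square theorem. So I expect no real obstacle; the remaining checks are routine bookkeeping of dimensions.
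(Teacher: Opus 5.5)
Your proposal is correct and is essentially the paper's proof. The paper likewise takes $A$ from Theorem~\ref{thm:nonselfdual}, sets $X=(A\times\widehat{A})^4$ (principally polarizable by Zarhin's trick) and $Y=A^8$, gets $D^b(X)\simeq D^b(Y)$ from exterior products of Mukai's kernel and the identity kernel, and rules out a principal polarization on $Y$ via $A^8\not\cong\widehat{A^8}$. Your aside on Zarhin's trick is a bit loose: the quaternion-type matrix should satisfy $M^{t}M\equiv -1$ modulo an integer annihilating $\ker\lambda$. The paper simply cites Zarhin for this step, so nothing in your argument depends on that detail.
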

  \begin{proof}
      By Theorem~\ref{thm:nonselfdual}, there exists a supersingular abelian variety $A$ of dimension $g$ such that
\[
A^n\not\cong\widehat{A^n}
\qquad\text{for every }n\geq1.
\]
Set
\[
X=(A\times\widehat{A})^4,
\qquad Y=A^8.
\]
Both are supersingular of dimension $8g$, and Zarhin's trick gives a principal polarization on $X$; see \cite[Theorem~1.1(ii) and Remark~1.4]{Zarhin2023}.

Mukai's equivalence
\[
D^b(\widehat{A})\simeq D^b(A)
\]
and the identity equivalence on $D^b(A)$ induce, by taking exterior products of their Fourier--Mukai kernels, an equivalence
\[
D^b(X)
\simeq D^b(A^4\times A^4)
=D^b(Y).
\]
On the other hand, we have 
\[
Y=A^8\not\cong\widehat{A^8}=\widehat{Y},
\]
which implies that $Y$ cannot admit any principal polarization.
  \end{proof}

\section{Principal polarized FM partner of supersingular ppav}

\subsection{}In this section, we consider Fourier--Mukai partners of supersingular principally polarized abelian varieties that also admit principal polarizations. 
We first recall the following standard consequence of Ogus's crystalline Torelli theorem; see
\cite[Theorem~6.2]{Ogus1979}.

\begin{lemma}\label{lem:norm}\label{lem:elliptic-factor}
Let $X$ be a supersingular abelian variety of dimension $g>1$.
Its \emph{unpolarized central leaf}, understood here as a set of isomorphism classes (cf.~\cite[Section~1.7]{Oort2009Foliations}), admits a well-defined bijection
\[
\begin{aligned}
\left\{
\begin{array}{c}
Y \text{ supersingular abelian variety}\\
Y[p^\infty]\simeq X[p^\infty]
\end{array}
\right\}\big/\cong
&\xrightarrow{\sim}
\mathbb{Z}_p^\times/\operatorname{Nrd}(R_X^\times),\\
[Y]&\longmapsto
\deg(h)\operatorname{Nrd}(R_X^\times),
\end{aligned}
\]
where $h:X\to Y$ is any isogeny of degree prime to $p$;
such an isogeny exists for every $Y$ in the set on the left. If $X[p^\infty]$ has a supersingular elliptic direct factor, then
\[
\operatorname{Nrd}(R_X^\times)=\mathbb{Z}_p^\times.
\]
In this case, every supersingular abelian variety $Y$ satisfying
$Y[p^\infty]\simeq X[p^\infty]$ is isomorphic to $X$.
\end{lemma}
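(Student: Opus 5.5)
The plan is to run the classical ``genus versus class'' argument. The central leaf is one $\ell$-adic genus: its members all have $p$-divisible group isomorphic to $X[p^\infty]$ and are isogenous to $X$ by prime-to-$p$ isogenies. We then measure the members through Tate modules and Dieudonn\'e modules. There are three things to check: existence of a prime-to-$p$ isogeny, well-definedness of the map, and bijectivity. The elliptic-factor statement is then a separate local computation.

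\emph{Existence.} Let $Y$ be supersingular with $Y[p^\infty]\simeq X[p^\infty]$. Pick any isogeny $\phi:X\to Y$. Since $\phi$ induces an isomorphism of rational Dieudonn\'e modules, I will use the fixed identification $M_Y\cong M_X$ to compare the lattice $\phi_*M_X$ with $M_Y$ inside $M_Y\otimes\mathbb Q$. By Tate's theorem over finite fields, together with Ogus/Kraft for supersingular Dieudonn\'e modules, $\operatorname{Hom}(X,Y)\otimes\mathbb Z_p\to\operatorname{Hom}(M_X,M_Y)$ is an isomorphism. So the given isomorphism $X[p^\infty]\simeq Y[p^\infty]$ can be $p$-adically approximated by a global $\psi\in\operatorname{Hom}(X,Y)$ that still induces an isomorphism on $p$-divisible groups. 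Such a $\psi$ is an isogeny of degree prime to $p$.

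\emph{Well-definedness.} Let $h,h'$ be two prime-to-$p$ isogenies $X\to Y$. Then $h^{-1}h'\in\operatorname{End}(X)\otimes\mathbb Q$ lies in $R_X^\times$ after $p$-adic completion, because it is an automorphism of $X[p^\infty]$. Since the reduced norm is multiplicative and equals the degree on isogenies, we get $\deg h'/\deg h=\operatorname{Nrd}(h^{-1}h')\in\operatorname{Nrd}(R_X^\times)$. If $Y\cong Y'$, composing with the isomorphism does not change degrees.

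\emph{Surjectivity.} Given a class $u$, I will choose a positive integer $N$ prime to $p$ representing $u$ modulo $\operatorname{Nrd}(R_X^\times)$. The issue is to produce a prime-to-$p$ isogeny of degree $N$ times a $p$-adic unit norm. Taking the quotient of $X$ by a cyclic subgroup of order $\ell$ gives degree $\ell$ with $p\nmid\ell$. Since degrees multiply and $\mathbb Z_p^\times$ is topologically generated by primes $\ell\ne p$ (Dirichlet), every class is hit.

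\emph{Injectivity.} This is where Ogus's crystalline Torelli (\cite[Theorem~6.2]{Ogus1979}) enters, and I expect it to be the main obstacle. Suppose $h:X\to Y$ and $h':X\to Y'$ are prime-to-$p$ isogenies with $\deg h\equiv\deg h'$ modulo $\operatorname{Nrd}(R_X^\times)$. Composing $h$ with a suitable unit of $R_X$, approximated globally, reduces to the case where the degrees agree up to the square of a prime-to-$p$ integer. We then need to conclude $Y\cong Y'$. The natural route is strong approximation for the reduced-norm-one group $SL_1(M_g(D))$. This group is simply connected and noncompact at every $\ell\ne p$, since $M_g(D)$ splits there. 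Strong approximation, with the archimedean place compact for $g>1$, should let us adjust $\ell$-adic lattices by global elements, so that the quotient isogeny $h'\circ h^{\vee}$ modulo degrees becomes an honest isomorphism. Ogus's theorem guarantees that supersingular abelian varieties of dimension $g\ge2$ are determined by their Dieudonn\'e lattices inside a fixed superspecial isocrystal; this fixes the $p$-part. I would phrase everything as double cosets $G(\mathbb Q)\backslash G(\mathbb A_f)/U$ and identify the quotient with the norm group. This is where $g>1$ is used.

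\emph{Elliptic factor.} If $X[p^\infty]\simeq E[p^\infty]\times G'$, then $R_X$ contains $\operatorname{End}(E[p^\infty])\times 1=O_D\times1$, where $O_D$ is the maximal order of $D$. The reduced norm $O_D^\times\to\mathbb Z_p^\times$ is surjective, because local norms from the unramified quadratic extension $\mathbb Z_{p^2}^\times\subset O_D^\times$ are surjective onto units. Hence $\operatorname{Nrd}(R_X^\times)=\mathbb Z_p^\times$. The quotient is then trivial, so by the bijection every $Y$ in the leaf is isomorphic to $X$.
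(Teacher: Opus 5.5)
The paper gives no argument for this lemma beyond citing Ogus, so your adelic route is a genuine attempt at a proof. Your existence, well-definedness, surjectivity and elliptic-factor steps are fine. One caveat on existence: over an arbitrary algebraically closed $k$, where $X$ need not descend to a finite field, you cannot get $\operatorname{Hom}(X,Y)\otimes\mathbb{Z}_p\cong\operatorname{Hom}(M_X,M_Y)$ from Tate's theorem. Get it instead by comparing $\mathbb{Q}_p$-dimensions ($4g^2$ on both sides) and using that $p$-divisibility of a homomorphism is detected on Dieudonn\'e modules.

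The gap is injectivity, which is the whole content of the lemma. You only announce it, and the mechanism you name is wrong at the decisive point.
\begin{itemize}
\item \emph{Wrong group.} Strong approximation must be applied to the global group $G^1=\ker(\operatorname{Nrd})\subset\mathrm{GL}_g(B)$. Here $B$ is the quaternion algebra over $\mathbb{Q}$ ramified at $p$ and $\infty$, and $\operatorname{End}^0(X)\cong M_g(B)$. It is not a statement about a group built from the $p$-adic algebra $D$.
\item \emph{Reversed hypothesis.} Density of $G^1(\mathbb{Q})$ in $G^1(\mathbb{A}_f)$ requires $G^1(\mathbb{R})\cong\mathrm{SL}_g(\mathbb{H})$ to be \emph{noncompact}. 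This holds exactly when $g\ge 2$; for $g=1$ it is $\mathrm{SU}(2)$ and the conclusion fails in general (class numbers of $B$). You assert the opposite, that the archimedean place is compact for $g>1$. Noncompactness at a finite $\ell$ only gives density in the adeles away from $\ell$, so it cannot move the $\ell$-adic lattice you need to adjust.
\item \emph{The ``square'' reduction.} Approximating a unit of $R_X$ by a global endomorphism controls the degree ratio only in $1+p^n\mathbb{Z}_p$, so this step is unjustified. It is also unnecessary.
\item \emph{Ogus at $p$.} Read as ``determined by the isomorphism class of the Dieudonn\'e module'', your version of Ogus would make $\mathbb{Z}_p^\times/\operatorname{Nrd}(R_X^\times)$ always trivial. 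That is false in general: for $p$ odd and a generic supersingular surface with $a(X)=1$ one gets $R_X=\mathbb{Z}_{p^2}+\Pi M_2(O_D)$ and $\operatorname{Nrd}(R_X^\times)=(\mathbb{Z}_p^\times)^2$.
\end{itemize}

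What is needed is the following.
\begin{itemize}
\item \emph{Double coset description.} After choosing quasi-isogenies, $Y\mapsto((T_\ell Y)_{\ell\ne p},M_Y)$ identifies the leaf with $G(\mathbb{Q})\backslash G(\mathbb{A}_f)/U$. Here $G=\mathrm{GL}_g(B)$ and $U=\prod_{\ell\ne p}\mathrm{GL}_{2g}(\mathbb{Z}_\ell)\times R_X^\times$. At $p$ this uses only the dictionary between finite subgroup schemes and Dieudonn\'e sublattices, together with $Y[p^\infty]\simeq X[p^\infty]$ to place $M_Y$ in the $G(\mathbb{Q}_p)$-orbit of $M_X$.
\item \emph{Reduction to norm one.} Suppose $\operatorname{Nrd}(x)\in\mathbb{Q}_{>0}\operatorname{Nrd}(y)\operatorname{Nrd}(U)$. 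The Hasse--Schilling theorem $\operatorname{Nrd}(G(\mathbb{Q}))=\mathbb{Q}_{>0}$ lets you replace $y$ by some $\gamma yu$ so that $y^{-1}x\in G^1(\mathbb{A}_f)$.
\item \emph{Density.} Density of $G^1(\mathbb{Q})$ modulo the open subgroup $yUy^{-1}\cap G^1(\mathbb{A}_f)$ then gives $x\in G(\mathbb{Q})\,y\,U$.
\item \emph{Conclusion.} Since $\mathbb{Q}_{>0}\backslash\mathbb{A}_f^\times/\operatorname{Nrd}(U)\cong\mathbb{Z}_p^\times/\operatorname{Nrd}(R_X^\times)$, and a prime-to-$p$ isogeny $h$ lands on the class of $\deg(h)^{\pm1}$, this is exactly the claimed injectivity. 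It is where $g>1$ enters.
\end{itemize}
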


We next recall a lemma that is well known to experts.
The next lemma places the principally polarized Fourier--Mukai partners under consideration in the same unpolarized central leaf and provides the degree information needed to apply
Lemma~\ref{lem:norm}.
 
\begin{lemma}\label{lem:ppav-block}\label{lem:square-block}
Let $X$ and $Y$ be supersingular principally polarized abelian varieties of dimension $g>1$. 
If $D^b(X)\simeq D^b(Y)$, then
\[
X[p^\infty]\simeq Y[p^\infty].
\]
If $X[p^\infty]$ is indecomposable, there is an isogeny $X\to Y$ of square degree prime to $p$.
\end{lemma}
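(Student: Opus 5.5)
The plan is to pass from the derived equivalence to an isomorphism of $p$-divisible groups via Orlov's theorem, and then to extract a suitable isogeny from the entries of Orlov's matrix using locality of endomorphism rings.

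\textbf{Step 1: from Orlov's isomorphism to $X[p^\infty]\simeq Y[p^\infty]$.} By Orlov's derived Torelli theorem \cite{Orlov2002}, the equivalence $D^b(X)\simeq D^b(Y)$ gives a symplectic isomorphism
\[
f=\begin{pmatrix} a&b\\ c&d\end{pmatrix}:X\times\widehat X\xrightarrow{\sim}Y\times\widehat Y .
\]
Let $\lambda_X,\lambda_Y$ be the principal polarizations. Conjugating by $\operatorname{id}\times\lambda_X$ and $\operatorname{id}\times\lambda_Y$ turns $f$ into an isomorphism
\[
\tilde f=\begin{pmatrix} a& b\lambda_X\\ \lambda_Y^{-1}c&\lambda_Y^{-1}d\lambda_X\end{pmatrix}:X^2\xrightarrow{\sim}Y^2 ,
\]
so $X[p^\infty]^2\simeq Y[p^\infty]^2$. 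I would then pass to covariant Dieudonn\'e modules. These are finite free $W(k)$-modules, and their endomorphism rings in the Dieudonn\'e category are $p$-adically complete, hence semiperfect. The Krull--Schmidt theorem therefore applies to such modules, and cancelling the square gives $M_X\simeq M_Y$, that is, $X[p^\infty]\simeq Y[p^\infty]$.

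\textbf{Step 2: a prime-to-$p$ isogeny when $X[p^\infty]$ is indecomposable.} Assume $X[p^\infty]$ is indecomposable. Then $\operatorname{End}(X[p^\infty])$ is a local ring. Fix the isomorphism $X[p^\infty]\simeq Y[p^\infty]$ from Step 1 and use it to view $\tilde f[p^\infty]$ as an invertible $2\times2$ matrix over this local ring. An invertible matrix over a local ring has a unit entry in its first column. Hence $\varphi=a$ or $\varphi=\lambda_Y^{-1}c$ is a homomorphism $X\to Y$ inducing an isomorphism on $p$-divisible groups. Such a $\varphi$ has finite kernel, so it is an isogeny, and its degree is prime to $p$.

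\textbf{Step 3: adjusting the degree to a square (main obstacle).} It remains to make $\deg\varphi$ a square, and I expect this to be the hardest step. I would first try to exploit the symplectic identities $\widehat a d-\widehat c b=1$, etc., for a Pfaffian-type relation showing that $\deg a$ and $\deg c$ differ by squares. If that fails, the fallback is to precompose $\varphi$ with an endomorphism $\psi\in\operatorname{End}(X)$ of degree $\deg(\varphi)\cdot m^2$ with $p\nmid m$, so that $\varphi\circ\psi$ has square degree prime to $p$. To produce $\psi$ I would work $\ell$-adically. For each prime $\ell\neq p$ one has $\operatorname{End}(X)\otimes\mathbb Z_\ell\cong M_{2g}(\mathbb Z_\ell)$, where the degree is the determinant. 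I would choose local targets such as $\operatorname{diag}(\deg\varphi,1,\dots,1)$ at the finitely many $\ell\mid\deg\varphi$, and a unit with square determinant at $p$, since $\operatorname{Nrd}(R_X^\times)$ contains $\mathbb Z_p^{\times2}$. Then I would approximate these targets by a global element of $\operatorname{End}(X)\subset M_g(B_{p,\infty})$ using density of the order in its adelic completion, keeping the prime-to-$p$ degree under control. Controlling the degree at the remaining primes during this approximation is the delicate part, and it is where the argument may need the positive-definite reduced-norm structure of $M_g(B_{p,\infty})$.
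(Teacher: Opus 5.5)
Your Steps~1 and~2 are sound and follow the paper. Orlov's isomorphism and the principal polarizations give $X^2\cong Y^2$, and Krull--Schmidt cancels the square. Locality of $\operatorname{End}(M_X)$ then produces an entry of Orlov's matrix that is an isomorphism on $p$-divisible groups, hence a prime-to-$p$ isogeny. The paper moves this entry into the $b$-slot; taking it from the first column is equally fine.

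The gap is Step~3. The missing idea is that the square degree is forced by the derived equivalence itself. Let $\mathcal E$ be the kernel and suppose $b$ is an isogeny. Then \cite[Lemma~3.1]{Orlov2002} gives a line bundle $L$ with $\mathbf R\pi_*L\cong\mathcal E\otimes^{\mathbf L}_k\mathbf Li_0^*(\mathcal E^\vee)$, where $\pi(x,\xi)=(x,a(x)+b(\xi))$ is finite flat of degree $\deg b$. Comparing ranks gives $\deg b=\operatorname{rk}(\mathcal E)^2$. Now precompose your equivalence with Mukai's transform $D^b(\widehat X)\simeq D^b(X)$; if your entry is $c$, also postcompose with $D^b(Y)\simeq D^b(\widehat Y)$. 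This puts $\pm a$ (resp.\ $\pm c$) in the upper-right slot. So the $\varphi$ you found in Step~2 already has square degree, and nothing needs adjusting. Your first instinct, that the symplectic structure should produce a square, points the right way. But the statement you need is that the isogeny entry itself has square degree, not a comparison of $\deg a$ with $\deg c$.

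Your fallback cannot substitute for this. If $\psi\in\operatorname{End}(X)$ has degree prime to $p$, then $\psi\in R_X^\times$ and $\deg(\varphi\psi)=\deg(\varphi)\operatorname{Nrd}(\psi)$. So precomposition never changes the class of the degree in $\mathbb Z_p^\times/\operatorname{Nrd}(R_X^\times)$; this is the well-definedness in Lemma~\ref{lem:norm}. Consequently:
\begin{itemize}
\item A $\psi$ with $\deg\psi=\deg(\varphi)m^2$ exists only if $\deg(\varphi)\in(\mathbb Z_p^\times)^2\operatorname{Nrd}(R_X^\times)$, which is essentially the claim you are trying to prove.
\item Concretely, your local target at $p$ must be a unit of reduced norm $\deg(\varphi)m^2$, not one of square norm. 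This is a local obstruction at $p$ that no adelic approximation can remove.
\item Step~3 no longer uses the derived equivalence, so it would apply verbatim to any $Y$ with $Y[p^\infty]\simeq X[p^\infty]$. That includes a $Y$ whose class in $\mathbb Z_p^\times/\operatorname{Nrd}(R_X^\times)$ does not meet $(\mathbb Z_p^\times)^2$. For such $Y$ no prime-to-$p$ isogeny $X\to Y$ of square degree exists at all.
\item The inclusion $(\mathbb Z_p^\times)^2\subseteq\operatorname{Nrd}(R_X^\times)$ that you invoke is unjustified. The general argument from scalars gives only $(\mathbb Z_p^\times)^{2g}\subseteq\operatorname{Nrd}(R_X^\times)$. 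Remark~\ref{rem:even-norm} observes that the square inclusion, together with this lemma, would settle the even-dimensional cases the paper leaves open.
\end{itemize}
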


\begin{proof}
By \cite[Theorem~2.19]{Orlov2002}, the equivalence induces an isomorphism
\[
f=\begin{pmatrix}a&b\\ c&d\end{pmatrix}:
X\times\widehat{X}\xrightarrow{\sim}Y\times\widehat{Y}.
\]
The principal polarizations give $X^2\cong Y^2$, and hence $M_X^{\oplus2}\simeq M_Y^{\oplus2}$.
Endomorphism rings of supersingular Dieudonn\'e modules are finite, complete $\mathbb{Z}_p$-algebras and therefore semiperfect.
Since idempotents split, Krull--Schmidt gives $M_X\simeq M_Y$; see \cite[Sections~4--5]{Krause2015}.

If $M:=M_X\simeq M_Y$ is indecomposable, then $\operatorname{End}_{\mathrm{DM}}(M)$ is local.
The matrix induced by $f$ is invertible over this ring, so at least one entry is a unit. 
Composing with the standard Fourier--Mukai transforms and using the principal polarizations permutes the block rows and columns up to signs and isomorphisms.
We may thus assume that $b:\widehat{X}\to Y$ induces an isomorphism on $p$-divisible groups. 
In particular, $b$ is an isogeny of degree prime to $p$.

Let $\mathcal E$ be the kernel of the equivalence obtained after the above modifications.
Let
\[
i:X\times\widehat X\hookrightarrow
(X\times\widehat X)\times(Y\times\widehat Y)
\]
be the graph embedding of $f$, and let
\[
q:(X\times\widehat X)\times(Y\times\widehat Y)
\longrightarrow X\times Y
\]
be the natural projection. 
Set $\pi=q\circ i$.

By \cite[Lemma~3.1]{Orlov2002}, there exists a line bundle $L$ on $X\times\widehat X$ such that
\[
\mathbf{R}\pi_*L
\cong \mathbf{R}q_*(i_*L)
\cong \mathcal E\otimes_k^{\mathbf{L}}
\mathbf{L}i_0^*(\mathcal E^\vee),
\]
where $i_0:\operatorname{Spec}k\hookrightarrow X\times Y$ is the inclusion of the origin and
\[
\mathcal E^\vee\coloneqq\mathbf{R}\mathcalCM{H}om_{X\times Y}(\mathcal E,\mathcal O_{X\times Y}).
\]

On the other hand, the morphism $\pi$ is given by
\[
\pi:X\times\widehat X\longrightarrow X\times Y,
\qquad
(x,\xi)\longmapsto (x,a(x)+b(\xi)).
\]
Since $b$ is an isogeny, $\pi$ is finite and flat of degree $\deg(b)$. 
Thus $\mathbf{R}\pi_*L\cong\pi_*L$, and taking ranks gives
\[
\begin{aligned}
\deg(b)
&=\deg(\pi)
=\operatorname{rk}(\pi_*L)\\
&=\operatorname{rk}(\mathcal E)\,
  \operatorname{rk}\bigl(\mathbf Li_0^*(\mathcal E^\vee)\bigr)\\
&=\bigl(\operatorname{rk}(\mathcal E)\bigr)^2,
\end{aligned}
\]
where the ranks of complexes are understood as virtual ranks.
Composing $b$ with the principal polarization $X\to\widehat X$ gives the required isogeny of square degree prime to $p$.
\end{proof}

\subsection{Proof of Theorem \ref{thm:intro-ppav}}
\begin{theorem}\label{thm:ppav-rigidity}
\label{thm:odd}\label{thm:even}\label{cor:elliptic-rigidity}
Let $X$ and $Y$ be supersingular principally polarized abelian varieties of dimension $g>1$. 
Assume either that
\begin{enumerate}[label=\textup{(\roman*)},leftmargin=*]
\item $X[p^\infty]$ is indecomposable, and either $g$ is odd or
$p\equiv3\pmod4$; or
\item $X[p^\infty]$ has a supersingular elliptic direct factor.
\end{enumerate}
Then
\[
D^b(X)\simeq D^b(Y)
\quad\Longleftrightarrow\quad X\cong Y.
\]
In \textup{(i)}, indecomposability holds in particular when $a(X)=1$.
\end{theorem}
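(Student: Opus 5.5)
The direction $X\cong Y\Rightarrow D^b(X)\simeq D^b(Y)$ is trivial, so assume $D^b(X)\simeq D^b(Y)$. By Lemma~\ref{lem:ppav-block}, $X[p^\infty]\simeq Y[p^\infty]$, so $Y$ lies in the unpolarized central leaf of $X$. Lemma~\ref{lem:norm} then reduces the claim $X\cong Y$ to showing that the class $\deg(h)\operatorname{Nrd}(R_X^\times)$ is trivial in $\mathbb{Z}_p^\times/\operatorname{Nrd}(R_X^\times)$, where $h\colon X\to Y$ is any isogeny of degree prime to $p$.

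Case (ii) is immediate. If $X[p^\infty]$ has a supersingular elliptic direct factor, Lemma~\ref{lem:norm} gives $\operatorname{Nrd}(R_X^\times)=\mathbb{Z}_p^\times$, so the quotient is trivial and $Y\cong X$.

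For case (i), $X[p^\infty]$ is indecomposable, so Lemma~\ref{lem:square-block} provides an isogeny $h\colon X\to Y$ of degree $m^2$ with $p\nmid m$. It remains to show $m^2\in\operatorname{Nrd}(R_X^\times)$. Two sources of norms are available.

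\begin{itemize}
\item Since $[m]\in R_X^\times$, we have $\operatorname{Nrd}([m])=m^{2g}$, so $m^{2g}\in\operatorname{Nrd}(R_X^\times)$.
\item $R_X$ contains $\mathbb{Z}_p$, and scalars $u\in\mathbb{Z}_p^\times$ have $\operatorname{Nrd}(u)=u^{2g}$. Hence $(\mathbb{Z}_p^\times)^{2g}\subseteq\operatorname{Nrd}(R_X^\times)$.
\end{itemize}

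If $g$ is odd, then $\gcd(2,2g)=2$. Since the quotient group is abelian and $\operatorname{Nrd}(R_X^\times)$ contains all $2g$-th powers, it also contains $m^2$, because $m^2$ is a product of a suitable power of $m^{2g}$ and a $2g$-th power in $\mathbb{Z}_p^\times$. Concretely, write $g=2k+1$; the $2$-adic part is handled through $u^{2g}$ with $u$ ranging over $\mathbb{Z}_p^\times$. This works because $(\mathbb{Z}_p^\times)^2/(\mathbb{Z}_p^\times)^{2g}$ has order prime to $2$, so squaring, which is bijective there, lets us absorb the odd part.

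This power-counting step is the one I would check carefully. The quotient $\mathbb{Z}_p^\times/(\mathbb{Z}_p^\times)^{2g}$ is finite, so $m^2$ lies in the subgroup generated by $m^{2g}$ and $(\mathbb{Z}_p^\times)^{2g}$ provided the index is coprime to $g$ in the relevant sense. If that fails, I would instead use a larger source of units. $R_X$ contains, via the Dieudonné module, elements acting as $\mathbb{Z}_{p^2}^\times$, and $\operatorname{Nrd}$ restricted to these is a norm from $\mathbb{Z}_{p^2}$ raised to a power. Norms from the unramified extension $\mathbb{Z}_{p^2}^\times\to\mathbb{Z}_p^\times$ are surjective. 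So if an embedded copy has reduced norm $N(x)^g$, we get all $g$-th powers, and for odd $g$, combining $g$-th powers with squares of the form $m^2$ gives what we need.

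If $g$ is even and $p\equiv 3\pmod 4$, we need $m^2$ to be a $g$-th power times a norm. Here I would use that $-1$ is not a square mod $p$. Up to sign, every unit is then a square, so $\pm m$ can be written as a square, which pushes $m^2$ into $(\mathbb{Z}_p^\times)^4$ up to the sign of the square root. I would then iterate, or use the $\mathbb{Z}_{p^2}$-units, to land in $\operatorname{Nrd}(R_X^\times)$. Honestly, the main obstacle is determining $\operatorname{Nrd}(R_X^\times)$ precisely enough, since for indecomposable $M$ the ring $R_X$ is local with residue field some $\mathbb{F}_{p^2}$-algebra. I would first compute the image of $\operatorname{Nrd}$ on the unit group of that residue field; this is where the parity of $g$ and the condition $p\equiv3\pmod 4$ enter. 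The final claim, that $a(X)=1$ implies indecomposability, holds because $a(M)$ is additive over direct sums and every nonzero supersingular summand has $a$-number at least $1$.
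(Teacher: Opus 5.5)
\textbf{There is a genuine gap in case (i).} The only elements of $H=\operatorname{Nrd}(R_X^\times)$ you actually exhibit are scalar norms. Since $m^{2g}=\operatorname{Nrd}([m])$ is itself a $2g$-th power, your two sources reduce to the single inclusion $(\mathbb{Z}_p^\times)^{2g}\subseteq H$, and that inclusion cannot force $m^2\in H$. For instance, with $p=7$ and $g=3$ one has $(\mathbb{Z}_7^\times)^{6}=1+7\mathbb{Z}_7$, while $m=3$ gives $m^2=9\not\equiv1\pmod 7$.

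Your observation that $(\mathbb{Z}_p^\times)^2/(\mathbb{Z}_p^\times)^{2g}$ has odd order for odd $g$ is true. But it only places the class of $m^2$ in an odd-order group; bijectivity of squaring there does not make that class trivial.

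The $p\equiv3\pmod4$ sketch has the same defect. The identity $(\mathbb{Z}_p^\times)^2=(\mathbb{Z}_p^\times)^4$ absorbs the $2$-primary part, but with $U=(\mathbb{Z}_p^\times)^2$ the odd part $\gcd(g,\tfrac{p-1}{2})\,p^{v_p(g)}$ of $[U:U^g]$ survives. For example, $p=7$, $g=6$, $m=3$ gives $9\notin(\mathbb{Z}_7^\times)^{12}=1+7\mathbb{Z}_7$.

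The fallback through an embedded $\mathbb{Z}_{p^2}$ is not justified for a non-superspecial indecomposable $M_X$. Even granting $\operatorname{Nrd}=N^g$ on it, you would only get $(\mathbb{Z}_p^\times)^g$. For $p=7$, $g=3$ this consists of the units $\equiv\pm1\pmod7$ and still misses $9$. Appealing to ``combining with squares of the form $m^2$'' is circular. Determining $H$ exactly is neither necessary nor available in general.

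\textbf{The missing idea} is to use the principal polarizations a second time to manufacture a non-scalar unit of known norm. Put $h'=\lambda_X^{-1}\circ\widehat h\circ\lambda_Y\colon Y\to X$ and $s=h'h\in\operatorname{End}(X)$. This is an isogeny of degree $\deg(h)^2=m^4$, prime to $p$, so $s\in R_X^\times$ and $m^4=\operatorname{Nrd}(s)\in H$.
\begin{itemize}
\item For odd $g$, the unit $[m]\,s^{-(g-1)/2}$ has reduced norm $m^{2g}\cdot m^{-2(g-1)}=m^2$.
\item For $p\equiv3\pmod4$, one has $U=\mu_{(p-1)/2}\times(1+p\mathbb{Z}_p)$ with $(p-1)/2$ odd, so $[U:U^g]=\gcd(g,\tfrac{p-1}{2})\,p^{v_p(g)}$ is odd. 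Since $U^g=(\mathbb{Z}_p^\times)^{2g}\subseteq H$, the group $U/(H\cap U)$ has odd order. The class of $m^2$ in it is killed by $2$ because $m^4\in H$, hence is trivial.
\end{itemize}
Your odd-order observation becomes exactly this argument once $m^4\in H$ is in hand. Your treatment of case (ii) is fine, as is your proof that $a(X)=1$ implies indecomposability via additivity of the $a$-number.
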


\begin{proof}
Suppose that $D^b(X)\simeq D^b(Y)$.
Case \textup{(ii)} follows directly from Lemmas~\ref{lem:norm} and~\ref{lem:ppav-block}.
In case \textup{(i)}, Lemma~\ref{lem:ppav-block} gives an isogeny $h:X\to Y$ with $\deg(h)=m^2$ and $p\nmid m$.
For chosen principal polarizations, put
\[
h'=\lambda_X^{-1}\circ\widehat h\circ\lambda_Y,
\qquad s=h' h\in R_X^\times.
\]
Then $\operatorname{Nrd}(s)=m^4$.
If $g$ is odd, the unit $u=[m]s^{-(g-1)/2}$ satisfies
\[
\operatorname{Nrd}(u)
=m^{2g}(m^4)^{-(g-1)/2}=m^2.
\]
Lemma~\ref{lem:norm} therefore gives $X\cong Y$.

Suppose that $p\equiv 3\pmod 4$.
Put
\[
H=\operatorname{Nrd}(R_X^\times),\qquad
U=(\mathbb{Z}_p^\times)^2,\qquad
q=\frac{p-1}{2}.
\]
The decomposition of $p$-adic units and the $p$-adic logarithm give
\[
U=\mu_q\times(1+p\mathbb Z_p),
\]
where $\mu_q$ denotes the group of $q$-th roots of unity in $\mathbb Z_p^\times$. 
Thus, writing $U^g=\{u^g:u\in U\}$, we obtain
\[
[U:U^g]=\gcd(g,q)\,p^{v_p(g)},
\]
which is finite and odd.

For every $a\in\mathbb Z_p^\times$, the scalar unit $a\cdot\mathrm{id}\in R_X^\times$ has reduced norm $a^{2g}$.
Consequently,
\[
U^g=(\mathbb Z_p^\times)^{2g}\subseteq H\cap U,
\]
so $U/(H\cap U)$ is a quotient of $U/U^g$ and hence has odd order.
Since $m^2\in U$ and
\[
m^4=\operatorname{Nrd}(h' h)\in H,
\]
the class of $m^2$ in $U/(H\cap U)$ has order dividing $2$.
It must therefore be trivial, giving $m^2\in H$.
As $\deg(h)=m^2$, Lemma~\ref{lem:norm} implies $X\cong Y$.
\end{proof}

\begin{corollary}\label{cor:threefold}
Let $X$ and $Y$ be supersingular principally polarized abelian threefolds over $k$.
Then
\[
D^b(X)\simeq D^b(Y)
\quad\Longleftrightarrow\quad X\cong Y.
\]
\end{corollary}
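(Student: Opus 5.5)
The plan is to reduce Corollary~\ref{cor:threefold} to Theorem~\ref{thm:ppav-rigidity}, splitting into cases according to the structure of $X[p^\infty]$. The direction $X\cong Y\Rightarrow D^b(X)\simeq D^b(Y)$ is trivial, so assume $D^b(X)\simeq D^b(Y)$. By Lemma~\ref{lem:ppav-block} we get $X[p^\infty]\simeq Y[p^\infty]$, so only the shape of $M_X$ matters. Since $g=3$ is odd, case (i) of Theorem~\ref{thm:ppav-rigidity} applies without any congruence condition on $p$ whenever $X[p^\infty]$ is indecomposable. So it suffices to show the following: if $X[p^\infty]$ is decomposable, then it has a supersingular elliptic direct factor, and case (ii) applies.

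Suppose $M_X\simeq M_1\oplus M_2$ with both summands nonzero. Each summand is the Dieudonn\'e module of a supersingular $p$-divisible group, because slopes are preserved under direct sums. Its height is therefore even, and its dimension is half its height. The dimensions add up to $3$, so one summand has dimension $1$ and height $2$. A one-dimensional supersingular $p$-divisible group of height $2$ over an algebraically closed field is unique up to isomorphism: it is the formal group of slope $1/2$, namely $E[p^\infty]$ for a supersingular elliptic curve $E$ (Dieudonn\'e--Manin, or the fact that there is only one connected one-dimensional formal group of height $2$). Hence $X[p^\infty]$ has $E[p^\infty]$ as a direct factor. Theorem~\ref{thm:ppav-rigidity}(ii) then gives $X\cong Y$.

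The step needing the most care is the decomposable case. There I must check the following.
\begin{itemize}
\item The summand is genuinely the $p$-divisible group of a supersingular elliptic curve, and not merely isogenous to one. The uniqueness of height-$2$, dimension-$1$ formal groups of slope $1/2$ handles this.
\item Decomposition in the category of Dieudonn\'e modules matches decomposition of $p$-divisible groups, which follows from the anti-equivalence.
\end{itemize}
Everything else is a direct citation. Alternatively, one could note that case (i) of Theorem~\ref{thm:intro-ppav} is exactly this corollary.
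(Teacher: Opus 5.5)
Your proof is correct. It takes a different, and somewhat more self-contained, route than the paper.

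\textbf{The paper's route.} It splits cases by the $a$-number:
\begin{itemize}
\item For $a(X)=1$, $X[p^\infty]$ is indecomposable, and Theorem~\ref{thm:ppav-rigidity}\textup{(i)} applies.
\item For $a(X)=2$, it cites the explicit Dieudonn\'e-module description in \cite[Section~4]{KaremakerYobukoYu2021} to produce a supersingular elliptic direct factor.
\item For $a(X)=3$, $X$ is superspecial.
\end{itemize}

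\textbf{Your route.} You split directly on whether $X[p^\infty]$ is indecomposable, which is exactly the hypothesis of case \textup{(i)}. You handle the decomposable case by a dimension count. Every nonzero summand is isoclinic of slope $1/2$, so it has positive dimension equal to half its height. A splitting of a $3$-dimensional group therefore contains a summand of dimension $1$ and height $2$. That summand is $E[p^\infty]$, by uniqueness of the one-dimensional formal group of height $2$ over $k$.

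\textbf{What each approach buys.} Your version needs neither the $a$-number nor the external classification. It also never uses the polarization in the decomposition step, so you never need to know which $a$-numbers force decomposability. The price is that the counting trick is special to $g\le 3$. Already for $g=4$, a $2+2$ splitting into two indecomposable summands of $a$-number $1$ has no elliptic factor, by Krull--Schmidt. The paper's $a$-number bookkeeping instead ties the cases to the known structure of supersingular Dieudonn\'e modules, at the cost of an outside citation.

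\textbf{One correction.} Drop your closing ``alternative.'' The paper deduces case \textup{(i)} of Theorem~\ref{thm:intro-ppav} from this very corollary, so citing it here would be circular.
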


\begin{proof}
If $a(X)=1$, one can apply Theorem~\ref{thm:ppav-rigidity}\textup{(i)}.
If $a(X)=2$, the Dieudonn\'e module description in \cite[Section~4]{KaremakerYobukoYu2021} shows that $X[p^\infty]$ has a supersingular elliptic direct factor.
The same holds when $a(X)=3$, since $X$ is then superspecial.
Thus Theorem~\ref{thm:ppav-rigidity}\textup{(ii)} applies in both cases.
\end{proof}

\begin{remark}\label{rem:even-norm}
Under the indecomposability hypothesis, the condition $(\mathbb{Z}_p^\times)^2\subseteq\operatorname{Nrd}(R_X^\times)$ also suffices. 
The argument does not settle the remaining cases in even dimension $g\geq4$ when $p=2$ or $p\equiv1\pmod4$.
\end{remark}

Theorem~\ref{thm:intro-ppav} follows from Theorem~\ref{thm:ppav-rigidity} and Corollary~\ref{cor:threefold}.

\printbibliography

\end{document}